\newtheorem{defi}{Definition}
\newtheorem{lem}{Lemma}
\newtheorem{cor}{Corollary}
\newtheorem{thm}{Theorem}
\newtheorem{prp}{Property}
\newtheorem{ex}{Example} 
\author{Jozef Kratica\affiliationmark{1}
  \and Vera Kova\v{c}evi\'c-Vuj\v{c}i\'c\affiliationmark{2}
  \and Mirjana \v{C}angalovi\'c\affiliationmark{2}}
\title{The strong metric dimension of some generalized Petersen graphs
\thanks{This research was partially supported by Serbian
Ministry of Education, Science and Technological Development under
the grants no. 174010 and 174033.}}
\affiliation{
  Mathematical Institute, Serbian Academy of Sciences and Arts\\
  Faculty of Organizational Sciences, University of Belgrade, Serbia}
\keywords{Strong metric dimension, generalized Petersen graphs}
\begin{document}
\publicationdetails{VOL}{2015}{ISS}{NUM}

\maketitle

\begin{abstract}
In this paper the strong metric dimension of generalized Petersen
graphs $GP(n,2)$ is considered. The exact value is determined for
cases $n=4k$ and $n=4k+2$, while for $n=4k+1$ an upper bound of the
strong metric dimension is presented. 
\end{abstract}

\section{Introduction}

The strong metric dimension problem was introduced by \cite{seb04}. 
This problem is defined in the following way.
Given a simple connected undirected graph $G$ = ($V$,$E$), where
$V=\{1,2,\dots,n\}$, $|E|=m$ and $d(u,v)$ denotes the distance
between vertices $u$ and $v$, i.e. the length of a shortest $u-v$
path. A vertex $w$ strongly resolves two vertices $u$ and $v$ if $u$
belongs to a shortest $v-w$ path or $v$ belongs to a shortest $u-w$
path. A vertex set $S$ of $G$ is a {\em strong resolving set} of $G$
if every two distinct vertices of $G$ are strongly resolved by some
vertex of $S$. A {\em strong metric basis} of $G$ is a strong
resolving set of the minimum cardinality. The {\em strong metric
dimension} of $G$, denoted by $sdim(G)$, is defined as the
cardinality of its strong metric basis. Now, the strong metric
dimension problem is defined as the problem of finding the strong
metric dimension of graph $G$.

\begin{ex} Consider the Petersen graph G given on Figure 1. It is easy to see that
set $S=\{u_0,u_1,u_2,u_3,\\v_0,v_1,v_2,v_3\}$ is a strong resolving
set, i.e. each pair of vertices in $G$ is strongly resolved by a
vertex from $S$. Since any pair with at least one vertex in $S$ is
strongly resolved  by that vertex, the only interesting case is pair
$u_4,v_4$. This pair is strongly resolved e.g. by $u_0 \in S$ since
the shortest path $u_0,u_4,v_4$ contains $u_4$. In Section 2 it will
be demonstrated that $S$ is a strong resolving set with the minimum
cardinality and, therefore, $sdim(G)=8$.
\end{ex}

\begin{figure}
\begin{center}
\resizebox{5cm}{!}{\includegraphics{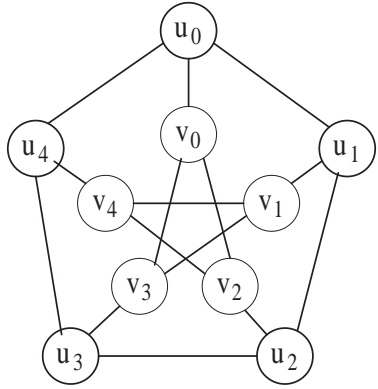}}
 \caption{Petersen graph G}
\end{center}
\end{figure}

The strong metric dimension has many interesting theoretical
properties. If $S$ is a strong resolving set of $G$, then the matrix
of distances from all vertices from $V$ to all vertices from $S$
uniquely determines graph $G$ \citep{seb04}.

The strong metric dimension problem is NP-hard in general case
\citep{oel07}. Nevertheless, for some classes of graphs the strong
metric dimension problem can be solved in polynomial time. For
example, in \citep{may11} an algorithm for finding the strong metric
dimension of distance hereditary graphs with $O(|V|\cdot |E|)$
complexity is presented.

In \citep{kra12a} an integer linear programming (ILP) formulation of
the strong metric dimension problem was proposed. Let variable $y_i$
determine whether vertex $i$ belongs to a strong resolving set $S$ or not,
i.e. $y_i  =\begin{cases}
1, & i \in S\\
0, & i \notin S
\end{cases}$. Now, the ILP model of the strong metric dimension problem
is given by (1)-(3).

\begin{equation}
\min \sum\limits_{i = 1}^n {y_i }
\end{equation}

subject to:

\begin{equation}
\sum\limits_{i = 1}^{n} {A_{(u,v),i} \cdot y_i}  \ge 1\quad \quad
\quad \quad \quad \quad 1 \le u < v \le n
\end{equation}

\begin{equation}
y_i  \in \{ 0,1\} \quad \quad \quad \quad \quad 1 \le i \le n
\end{equation}

where $A_{(u,v),i}  =\begin{cases}
1, & d(u,i) = d(u,v) + d(v,i)\\
1, & d(v,i) = d(v,u) + d(u,i)\\
0, & otherwise
\end{cases}$.\\

This ILP formulation will be used in Section 2 for finding the
strong metric dimension of generalized Petersen graphs in singular
cases, when dimensions are small. The following definition and two
properties from literature will also be used in Section 2.

\begin{defi}\citep{kra12a} A pair of vertices $u,v \in V$, $u \ne v$, is mutually maximally distant
if and only if
\begin{enumerate}[(i)]
    \item $d(w,v) \leq d(u,v)$ for each $w$ such that $\{w,u\} \in E$ and
    \item $d(u,w) \leq d(u,v)$ for each $w$ such that $\{v,w\} \in E$.
\end{enumerate}
\end{defi}

\begin{prp}\citep{kra12a} If $S \subset V$ is a strong resolving set of graph $G$, then,
for every two maximally distant vertices $u,v \in V$, it must be $u
\in S$ or $v \in S$.\end{prp}

Let $Diam(G)$ denote the diameter of graph $G$, i.e. the maximal
distance between two vertices in $G$.

\begin{prp}\citep{kra12a} If $S \subset V$ is a strong resolving set of graph $G$,
then, for every two vertices $u,v \in V$ such that $d(u,v) =
Diam(G)$, it must be $u \in S$ or $v \in S$.
\label{{prp:s1}}\end{prp}

A survey paper \citep{kra13} contains the results related to the
strong metric dimension of graphs up to mid 2013. Here we give a
short overview of the newest results which are not covered in
\citep{kra13}:
\begin{itemize}
\item In \citep{das14} an 2-approximation algorithm and a
($2-\epsilon$)-inapproximability result for the strong metric
dimension problem is given;
\item In \citep{yi13} a Nordhaus-Gaddum-type upper and lower bound for the
strong metric dimension of a graph and its complement is given. The
characterization of the cases when the bounds are attained is also
presented;
\item A comparison between the zero forcing number and the strong metric
dimension of graphs is presented by \cite{kan14};
\item Closed formulas for the strong metric dimension of several
families of the Cartesian product of graphs and the direct product
of graphs are given in \citep{rod13,kuz14b};
\item In \citep{kuz14b,kuz13} the authors study the strong metric dimension of
the rooted product of graphs and express it in terms of invariants
of the factor graphs;
\item Several relationships between the strong metric dimension of
the lexicographic product of graphs and the strong metric dimension
of its factor graphs are obtained in \citep{kuz14};
\item The strong metric dimension of the strong product of graphs is
analyzed in \citep{kuz14b,kuz15};
\item In \citep{sal14} the strong metric dimension of some convex polytopes
is obtained;
\item The strong metric dimension of tetrahedral diamond graphs
is found in \citep{raj};
\item The problem of finding the strong metric dimension of
circulant graphs and a lower bound for diametrically vertex uniform
graphs is solved by \cite{gri};
\item Some similar dimensions of graphs are introduced in literature:
the strong partition dimension \citep{yer13} and the fractional
strong metric dimension \citep{kan13}.
\end{itemize}

This paper considers the strong metric dimension of a special class
of graphs, so called generalized Petersen graphs. The generalized
Petersen graph $GP(n,k)$ ($n \geq 3$; $1 \leq k < n/2$) has $2n$
vertices and $3n$ edges, with vertex set $V$ = \{$u_i, v_i \;|\; 0
\leq i \leq n-1\}$ and edge set $E$ = \{$\{u_i,u_{i+1}\}$,
$\{u_i,v_i\}$, $\{v_i,v_{i+k}\}$ $|$ $0 \leq i \leq n-1$\}, where
vertex indices are taken modulo $n$. The Petersen graph from Figure
1 can be considered as $GP(5,2)$.

Generalized Petersen graphs were first studied by 
\cite{gpprvi}. Various properties of $GP(n,k)$ have been recently
theoretically investigated in the following areas: metric dimension \citep{naz14},
decycling number \citep{gao15}, component connectivity \citep{fer14} and
acyclic 3-coloring \citep{zhu14}.

In the case when $k=1$ it is easy to see that $GP(n,1) \equiv C_n
\Box P_2$, where $\Box$ is the Cartesian product of graphs, while
$C_n$ is the cycle on $n$ vertices and $P_2$ is the path with 2
vertices. Using the following result from \citep{rod13}: $sdim(C_n
\Box P_r)$ = $n$, for $r \geq 2$, it follows that $sdim(GP(n,1))=n$.

\section{The strong metric dimension of $GP(n,2)$}

In this section we consider the strong metric dimension of the
generalized Petersen graph $GP(n,2)$. The exact value is determined
for cases $n=4k$ and $n=4k+2$, while for $n=4k+1$ an upper bound of
the strong metric dimension is presented. In order to prove that the
sets defined in Lemma 1, Lemma 3 and Lemma 5 are strong resolving we
used shortest paths given in Tables \ref{l4k2a}-\ref{l4k1a}, which is organized as follows:
\begin{itemize}
\item First column named ''{\em case}'' contains
the case number;
\item Next two columns named ''{\em vertices}'' and ''{\em res. by}'' contain a pair of vertices and
a vertex which strongly resolves them;
\item Last two columns named ''{\em condition}'' and ''{\em shortest path}''
contain the condition under which the vertex in column five strongly
resolves the pair in column four and the corresponding shortest path,
respectively.
\end{itemize}

\begin{sidewaystable}
\small \caption{Shortest paths in Lemma \ref{4k2a}}
\label{l4k2a}
\begin{center}
\begin{tabular}{|l|l|c|c|c|}
\hline
  {\em Case } & {\em vertices} & {\em res. by} & {\em condition} & {\em shortest path} \\
 \hline
 1 & ($u_i,u_j$) , & $u_{i-1}$ & $j-i = 2$ & $u_{i-1}, u_i, u_{i+1}, u_{i+2}=u_j$\\
  & $i$,$j$ odd & $u_{i-1}$ & $4 \leq j-i \leq 2k$ & $u_{i-1}, u_i, v_i, v_{i+2}, ..., v_j, u_j$\\
\hline
  2 & ($v_i,v_j$), & $u_j$ and $u_{j+1}$ & $j-i$ even & $v_i, v_{i+2}, ..., v_j, u_j, u_{j+1}$ \\
 &  $i,j \geq 2k+1$ & $v_{2k}$ & $j-i$ odd, $i$ even & $v_{2k}, v_{2k+2}, ..., v_i, u_i, u_{i+1},v_{i+1}, v_{i+3}, ..., v_j$ \\
  &  & $v_{2k-1}$ & $j-i$ odd, $i$ odd & $v_{2k-1}, v_{2k+1}, ..., v_i, u_i, u_{i+1},v_{i+1}, v_{i+3}, ..., v_j$ \\
\hline
  3 & ($u_i,v_j$),  & $u_{i+1}$ & $i \geq j$, $j$ odd & $v_j, v_{j+2}, ..., v_j, v_{j+2}, ..., v_i, u_i,u_{i+1}$\\
  & $i$ odd, $j \geq 2k+1$& $v_{2k}$ & $i > j$, $j$ even & $v_{2k}, v_{2k+2}, ..., v_j, u_j, u_{j+1}, v_{j+1}, v_{j+3}, ..., v_i, u_i$\\
  & & $u_{i-1}$ & $j$ odd,$0 < j-i \leq 2k$ & $u_{i-1}, u_i, v_i, v_{i+2},...,v_j$\\
  & & $u_{i+1}$ & $j$ odd,$j-i \geq 2k+2$ & $v_j, v_{j+2}, ...., v_i, u_i, u_{i+1}$\\
 & & $v_{2k-1}$ & $j-i = 1$ & $v_{2k-1}, v_{2k+1}, ..., v_i, u_i, u_{i+1},v_{i+1}=v_j$\\
 & & $u_{j+2}=u_{i+5}$ & $j-i = 3$ & $u_i, u_{i+1}, v_{i+1}, v_{i+3}=v_j, v_{j+2}, u_{j+2}$\\
  & & $u_j$ & $j$ even, $5 \leq j-i \leq 2k+1$ & $u_i, u_{i+1}, v_{i+1}, v_{i+3}, ..., v_j, u_j$\\
  & & $u_j$ & $j$ even, $2k+3 \leq j-i < 4k-1$ & $u_j, v_j, v_{j+2}, ..., v_{i-1}, u_{i-1}, u_i$\\
  & & $v_1$ & $j$ even, $j-i = 4k-1$ & $v_j=v_{4k}, v_{j+2}=v_0, u_0, u_1=u_i, v_1$\\
\hline
 \end{tabular}
\end{center}
\end{sidewaystable}

\begin{lem} \label{4k2a}Set $S = \{ u_{2i}, v_i | i=0,1,\dots,2k\}$ is a strong resolving set of $GP(4k+2,2)$ for $k \geq 3$.\end{lem}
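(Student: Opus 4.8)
The plan is to verify the strong resolving property straight from the definition, first cutting the problem down to a small family of vertex pairs and then certifying each one with an explicit geodesic taken from Table~\ref{l4k2a}.

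First I would dispose of the trivial pairs. If a pair $\{x,y\}$ has one of its vertices, say $x$, lying in $S$, then $x$ itself strongly resolves the pair, since $x$ occurs as an endpoint of every shortest $y$--$x$ path and so lies on such a path. Hence it suffices to strongly resolve every pair whose \emph{both} vertices lie in $V\setminus S$. Because $S=\{u_{2i},v_i\mid i=0,\dots,2k\}$, the complement is $V\setminus S=\{u_i\mid i\text{ odd}\}\cup\{v_j\mid 2k+1\le j\le 4k+1\}$, and the pairs drawn from it fall into exactly the three families tabulated: both vertices outer with odd index (Case~1), both vertices inner with index at least $2k+1$ (Case~2), and one outer-odd vertex together with one inner-high vertex (Case~3).

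Second, I would establish a distance lemma for $GP(4k+2,2)$. The key structural fact is that, since the inner edges join indices differing by $2$ and $n=4k+2$ is even, the inner vertices split into two disjoint cycles of length $2k+1$ (the even-indexed and the odd-indexed $v$'s). Every shortest path is then assembled from outer-cycle steps, spokes $\{u_i,v_i\}$, and inner-cycle steps, with a parity switch between the two inner cycles possible only by detouring through the outer cycle via two spokes. From this I would read off closed formulas for $d(u_i,u_j)$, $d(v_i,v_j)$, and $d(u_i,v_j)$ in terms of the cyclic index differences. With these in hand, each row of Table~\ref{l4k2a} is checked in two parts: (a) the indicated resolving vertex belongs to $S$ --- for instance $u_{i-1}$ and $u_{i+1}$ have even index when $i$ is odd, and $v_{2k}$, $v_{2k-1}$, $v_1$ lie in $S$ by construction; and (b) the listed walk is a geodesic of the length predicted by the distance formula, arranged so that one of the two vertices of the pair is interior to a shortest path from the resolving vertex to the other vertex --- which is precisely the condition for strong resolution.

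The main obstacle will be part (b) for the longer, wrap-around sub-cases of Case~3, where $j$ is even and $j-i$ is large (the ranges $5\le j-i\le 2k+1$, $2k+3\le j-i<4k-1$, and the boundary value $j-i=4k-1$). There the claimed geodesic leaves the outer cycle, traverses an inner cycle, and returns through a spoke, possibly crossing index $0$, so I would have to confirm that no outer-cycle detour and no alternative inner routing is strictly shorter, and that the correct endpoint (not the resolving vertex) lies strictly inside the path. The parity of $j-i$ and the modular reduction of indices must be tracked carefully, since they decide which inner cycle is used and whether the resolver is $u_j$, $v_1$, or one of $v_{2k},v_{2k-1}$; the hypothesis $k\ge 3$ is what guarantees these index ranges are well formed and nonempty. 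All remaining rows reduce to short direct checks once the distance formula is fixed.
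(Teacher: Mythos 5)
Your proposal follows essentially the same route as the paper's proof: reduce to pairs with both vertices in $V\setminus S$, observe that these fall into exactly the three families of Table~\ref{l4k2a} (outer--outer with odd indices, inner--inner with indices at least $2k+1$, and mixed), check that each tabulated resolving vertex lies in $S$ by parity or by construction, and certify resolution via the tabulated geodesics. The only substantive difference is that you propose to back the geodesic claims with explicit distance formulas derived from the two inner $(2k+1)$-cycles --- a verification the paper itself omits, since it simply asserts that the paths in the table are shortest --- and your plan correctly identifies this as the main remaining burden without actually carrying it out, so it stands at roughly the same level of detail as the published argument.
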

\begin{proof}Let us consider pairs of vertices such that neither
vertex is in $S$. There are three possible cases:
\begin{itemize}
\item[Case 1.] $(u_i,u_j)$, $i,j$ are odd. Without loss of generality we
may assume that $i < j$. If $j - i \leq 2k$, according to Table \ref{l4k2a}, vertices $u_i$ and
$u_j$ are strongly resolved by vertex $u_{i-1}$. The shortest
paths corresponding to the subcases $j-i=2$ and $4 \leq j-i \leq 2k$
are also given in Table \ref{l4k2a}. As $i-1$ is even, then $u_{i-1} \in S$.
If $j - i \geq 2k+2$, then pair $(u_i,u_j)$ can be represented as
pair $(u_{i'},u_{j'})$, where $i'=j$ and $j'=i+4k+2$. Since
$j' - i' \leq 2k$, the situation is reduced to the previous one.

\item[Case 2.] $(v_i,v_j)$, $i,j \geq 2k+1$. Without loss of generality we
may assume that $i < j$. If $j-i$ is even, $v_i$ and $v_j$ are
strongly resolved by both $u_j$ and $u_{j+1}$. When $j$ is even $u_j
\in S$, while $u_{j+1} \in S$ for odd $j$. If $j-i$ is odd, then
$v_i$ and $v_j$ are strongly resolved by $v_{2k} \in S$ if $i$ is
even, and $v_{2k-1} \in S$ if $i$ is odd, and hence $u_i$ and $u_j$
are strongly resolved by $S$;

\item[Case 3.] $(u_i,v_j)$, $i$ odd, $j \geq 2k+1$. There are
nine subcases, characterized by conditions presented in Table
\ref{l4k2a}. Vertices which strongly resolve pair ($u_i,v_j$) listed
in Table \ref{l4k2a} belong to set S. Indeed, vertices $v_1$,
$v_{2k-1}$ and $v_{2k}$ belong to $S$ by definition, while
$u_{i-1}$, $u_{i+1}$ and $u_{i+5}$ belong to $S$ since $i$ is odd.
Finally, $u_j \in S$ since $j$ is even by condition.
\end{itemize}
\end{proof}

\begin{lem} \label{4k2b}If $S$ is a strong resolving set of $GP(4k+2,2)$, then $|S| \geq 4k+2$, for any $k \geq 2$.\end{lem}
\begin{proof} Since $d(u_i,u_{i+2k+1})=d(v_i,v_{i+2k+1})=k+3 =
Diam(GP(4k+2,2)), i=0,1,...,2k$, from Property 2, at least $2k+1$
u-vertices and $2k+1$ v-vertices belong to $S$. Therefore, $|S| \geq
4k+2$.
\end{proof}

The strong metric dimension of $GP(4k+2,2)$ is given in Theorem \ref{4k2}.

\begin{thm} \label{4k2} For all $k$ it holds that $sdim(GP(4k+2,2))=4k+2$.\end{thm}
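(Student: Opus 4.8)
The plan is to combine the two preceding lemmas for the bulk of the cases and dispatch the remaining small instances by direct computation. For $k \geq 3$, Lemma \ref{4k2a} exhibits a strong resolving set of cardinality $(2k+1)+(2k+1)=4k+2$, which gives $sdim(GP(4k+2,2)) \leq 4k+2$, while Lemma \ref{4k2b} (valid already for $k \geq 2$) supplies the matching lower bound $sdim(GP(4k+2,2)) \geq 4k+2$. Putting these together yields the claimed equality for every $k \geq 3$.

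The only admissible values of $k$ not covered by this argument are the small ones, namely $k=1$ and $k=2$, corresponding to $GP(6,2)$ and $GP(10,2)$. (The value $k=0$ gives $n=2$, which violates the defining constraint $1 \leq 2 < n/2$ and so need not be considered.) For $k=2$ the lower bound $sdim(GP(10,2)) \geq 10$ is already provided by Lemma \ref{4k2b}, so only a matching strong resolving set of size $10$ must be produced; for $k=1$ both bounds must be established independently, since neither lemma applies in that range.

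For these two singular instances I would invoke the integer linear programming formulation (1)--(3) introduced in Section 1. Because $GP(6,2)$ and $GP(10,2)$ have only $12$ and $20$ vertices respectively, the coefficient matrix $A_{(u,v),i}$ can be computed explicitly from the all-pairs distances, and the resulting ILP can be solved to optimality. This simultaneously certifies the optimal value and exhibits an optimal strong resolving set, confirming $sdim(GP(6,2))=6$ and $sdim(GP(10,2))=10$, in agreement with the formula $4k+2$.

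The main obstacle is not the general case, where the two lemmas mesh cleanly, but rather ensuring that the formula persists down to the smallest admissible graphs. In small generalized Petersen graphs the diameter and the pattern of mutually maximally distant pairs can behave irregularly, so the explicit constructions and shortest-path tables underlying Lemma \ref{4k2a} cannot simply be extrapolated to $k=1,2$. Relying on the exact ILP solution for these two values sidesteps this difficulty and makes the phrase ``for all $k$'' rigorous.
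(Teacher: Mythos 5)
Your proof takes essentially the same route as the paper: the two lemmas give matching upper and lower bounds for large $k$, and the remaining small instances are settled by solving the ILP formulation (1)--(3) to optimality. You are in fact slightly more careful than the paper's own proof, which asserts that the lemmas cover all $k \geq 2$ even though Lemma \ref{4k2a} is only stated for $k \geq 3$ (the paper covers $n=10$ only via a later computational remark), whereas you explicitly flag $k=2$ and handle it by the ILP.
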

\begin{proof}
It follows directly from Lemmas \ref{4k2a} and \ref{4k2b} that $sdim(GP(4k+2,2)) = 4k+2$ for $k \geq 2$.
For $k=1$, using CPLEX solver on ILP formulation (1)-(3), we have proved that set $S$ from Lemma \ref{4k2a} is also a
strong metric basis of $GP(6,2)$, i.e. $sdim(GP(6,2))=6$.
\end{proof}

\begin{sidewaystable}
\small \caption{Shortest paths in Lemma \ref{4ka}}
\label{l4ka}
\begin{center}
\begin{tabular}{|l|l|c|c|c|}
\hline
  {\em Case } & {\em vertices} & {\em res. by} & {\em condition} & {\em shortest path} \\
 \hline
 1 & ($u_i,u_j$), & $u_{j+1}=u_{i+3}$ & $j-i = 2$ & $u_i, u_{i+1}, u_{i+2}=u_j, u_{j+1}$\\
  & $i,j \geq 2k$,$i,j$ even &$u_{j+1}$ & $j-i \geq 4$ & $u_i, v_i, v_{i+2}, ..., v_j, u_j, u_{j+1}$\\
 \hline
 2 & ($v_i,v_j$), & $u_{j+1}$ & $j-i \leq 2k-2$  & $v_i, v_{i+2}, ..., v_j, u_j, u_{j+1}$\\
 & $i,j$ even & $u_i$ and $u_{i+2k}$ & $j-i = 2k$  & $u_i, v_i, v_{i+2}, ..., v_{i+2k}=v_j, u_{i+2k}=u_j$\\
 \hline
 3 & ($u_i,v_j$), & $u_{i-1}$& $0 \leq j-i \leq 2k-2$ & $ u_{i-1}, u_i, v_i, v_{i+2}, ..., v_j$\\
  & $i,j$ even,  & $u_{i-2k} = u_j$& $j-i = -2k$ & $ u_i, v_i, v_{i-2}, ..., v_{i-2k}=v_j, u_{i-2k}=u_j$\\
  & $i \geq 2k$ & $u_{j-1}$ & $-2k < j - i < -2$ & $ u_{j-1}, u_j, v_j, v_{j+2}, ..., v_i, u_i$\\
  & & $u_{i+1}$ & $j - i = -2$ & $v_j, v_{j+2}=v_i,u_i,u_{i+1}$\\
\hline
 \end{tabular}
\end{center}
\end{sidewaystable}

The strong metric dimension of $GP(4k,2)$ will be determined using
Lemmas \ref{4ka} and \ref{4kb}.

\begin{lem}\label{4ka} Set $S= \{u_i|i=0,1,...,2k-1\} \cup \{u_{2k+2i+1}| i=0,1,...k-1\} \cup \{v_{2i+1}| i=0,1,...2k-1\}$
is a strong resolving set of $GP(4k,2)$ for $k \geq 3$.\end{lem}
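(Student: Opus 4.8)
The plan is to follow the same strategy as in the proof of Lemma \ref{4k2a}: since any vertex $w \in S$ strongly resolves every pair containing $w$ (the endpoint $w$ lies trivially on the shortest $w$--$x$ path), it suffices to verify that each pair of vertices both lying outside $S$ is strongly resolved by some element of $S$. First I would determine the complement $V \setminus S$. Among the $u$-vertices, $S$ contains $u_0,\dots,u_{2k-1}$ together with the odd-indexed $u_{2k+1},u_{2k+3},\dots,u_{4k-1}$, so the $u$-vertices outside $S$ are exactly the even-indexed ones with $2k \leq i \leq 4k-2$. Among the $v$-vertices, $S$ contains precisely the odd-indexed ones, so the $v$-vertices outside $S$ are the even-indexed $v_0,v_2,\dots,v_{4k-2}$. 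These two descriptions match the headers of the three cases in Table \ref{l4ka}.

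Next I would split the argument into the three cases of Table \ref{l4ka}. In Case 1 both vertices are even-indexed $u$-vertices with indices in $[2k,4k-2]$; taking $i<j$ without loss of generality forces $j-i \leq 2k-2$, so only the subcases $j-i=2$ and $j-i\geq 4$ arise, and the resolving vertex $u_{j+1}$ and its shortest path are read off from the table. In Case 2 both vertices are even-indexed $v$-vertices; using the rotational symmetry of $GP(4k,2)$ I would reduce the circular index difference to $0 < j-i \leq 2k$ and invoke the two subcases $j-i\leq 2k-2$ and $j-i=2k$. In Case 3 I would treat the mixed pair $(u_i,v_j)$ with $i$ even, $i\geq 2k$, and $j$ even, using the signed difference $j-i$ reduced modulo $4k$ and the four subcases listed in the table.

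The bookkeeping step is to confirm that every resolving vertex named in Table \ref{l4ka} belongs to $S$. The key observation is that the entire set of odd-indexed $u$-vertices lies in $S$: the odd indices $1,3,\dots,2k-1$ sit in the block $\{u_0,\dots,u_{2k-1}\}$, while $2k+1,2k+3,\dots,4k-1$ form the second block. Hence every resolver of the form $u_{i\pm 1}$ or $u_{j\pm 1}$ is odd-indexed (because $i,j$ are even) and therefore automatically in $S$. The only even-indexed resolvers are $u_i$ (Case 2, where $i\leq 2k-2$ since $j=i+2k\leq 4k-2$) and $u_{i-2k}$ (Case 3, where $i-2k\leq 2k-2$), and both land in the block $\{u_0,\dots,u_{2k-1}\}\subset S$. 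Each such check is a short parity-and-range verification.

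I expect the main obstacle to be verifying that the walks tabulated in Table \ref{l4ka} are genuinely \emph{shortest} paths, as this rests on the metric structure of $GP(4k,2)$: for each listed route along the spoke, inner, and outer edges one must confirm that its length realizes the graph distance and that no shorter route exists through the opposite arc of the cycle. Establishing these distance identities, together with checking that the three cases are exhaustive and that the modular index reductions behave correctly at the boundaries $i=2k$ and $i=4k-2$, is where the real effort lies; once the listed paths are known to be shortest, the strong-resolution property follows immediately from the definition.
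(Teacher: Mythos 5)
Your proposal is correct and follows essentially the same route as the paper's proof: the same identification of $V\setminus S$ (even-indexed $u_i$ with $2k\le i\le 4k-2$ and even-indexed $v_j$), the same three-case split matching Table~\ref{l4ka}, the same resolving vertices, and the same parity-and-range checks that each resolver lies in $S$ (including the two even-indexed resolvers $u_i$ in Case~2 and $u_{i-2k}=u_j$ in Case~3 landing in $\{u_0,\dots,u_{2k-1}\}$). Like the paper, you defer the verification that the tabulated walks are shortest paths to the table itself, so the two arguments are at the same level of detail.
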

\begin{proof}
Since $S$ contains $u_i$, $i=1,...,2k-1$ and all $u_i$ and $v_i$ for odd $i$,
we need to consider only three possible cases:
\begin{itemize}
\item[Case 1.] $(u_i,u_j)$, $i,j \geq 2k$ are even. Without loss of generality we
may assume that $i < j$. Vertices $u_i$ and $u_j$ are strongly
resolved by vertex $u_{j+1}$ (see Table \ref{l4ka}). The shortest paths corresponding
to subcases $j-i=2$ and $j-i \geq 4$ are given in Table \ref{l4ka}. As $j+1$ is odd,
then $u_{j+1} \in S$;

\item[Case 2.] $(v_i,v_j)$, $i,j$ are even. Without loss of generality we
may assume that $i < j$. If $j-i \leq 2k-2$, vertices $v_i$ and
$v_j$ are strongly resolved by $u_{j+1} \in S$. If $j - i \geq 2k+2$,
then pair $(v_i,v_j)$ can be represented as
pair $(v_{i'},v_{j'})$, where $i'=j$ and $j'=i+4k$.
Since $j' - i' \leq 2k-2$, the situation is reduced to the previous one.
If $j-i = 2k$, vertices $v_i$ and
$v_j=v_{i+2k}$ are strongly resolved by both $u_i$ and
$u_j=u_{i+2k}$. Since $S$ contains $u_0, ..., u_{2k-1}$ it follows
that $u_i$ or $u_{i+2k}$ belongs to $S$ and hence $v_i$ and $v_j$
are strongly resolved by $S$;

\item[Case 3.] $(u_i,v_j)$, $i,j$ even, $i \geq 2k$. Let us assume first that $i \leq j$.
If $j-i \leq 2k-2$, vertices $u_i$ and $v_j$ are strongly resolved
by vertex $u_{i-1}$. As $i-1$ is odd it follows that $u_{i-1} \in S$
and $u_i$ and $v_j$ are strongly resolved by $S$. Assume now that $i
> j$. If $j-i = -2k$ then vertices $u_i$ and $v_j$=$v_{i-2k}$ are
strongly resolved by vertex $u_j$=$u_{i-2k}$. Since $S$ contains
$u_0, ..., u_{2k-1}$ and $i \geq 2k$, it follows that $u_{i-2k}$
belongs to $S$ and hence $u_i$ and $v_j$ are strongly resolved by
$S$.  If $-2k < j-i < -2$ then pair ($u_i,v_j$) is strongly resolved
by $u_{j-1}$. Since $j-1$ is odd, then $u_{j-1} \in S$. Finally, if
$j-i=-2$ then pair of vertices $(u_i,v_j)$ is strongly resolved by
$u_{i+1} \in S$. These four subcases cover all possible values for
$i$ and $j$ having in mind that vertex indices are taken modulo $n$.
\end{itemize}
\end{proof}

\begin{lem}\label{4kb} If $k \geq 10$ and $S$ is a strong resolving set of $GP(4k,2)$, then $|S| \geq 5k$.\end{lem}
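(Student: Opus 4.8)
The plan is to show a lower bound of $5k$ on the size of any strong resolving set $S$ of $GP(4k,2)$ by exhibiting $5k$ vertices that must lie in $S$, using Property 1 on mutually maximally distant (MMD) pairs. Since we already used Property 2 (diameter pairs) in the $GP(4k+2,2)$ case, the natural first step is to compute the diameter of $GP(4k,2)$ and to identify which pairs of vertices are mutually maximally distant. I expect the MMD pairs here to be more plentiful and more varied than the clean antipodal pairs of the previous lemma, because $4k$ is divisible by $4$ and the $v$-cycle structure (edges $v_i v_{i+2}$) splits into two disjoint cycles on even- and odd-indexed $v$-vertices, each of length $2k$; this parity splitting is what pushes the bound up from $4k$ to $5k$ rather than remaining at $4k$.

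The key combinatorial device I would use is the \emph{strong resolving graph} $G_{SR}$ of $GP(4k,2)$: the graph whose vertices are those of $GP(4k,2)$ and whose edges are exactly the MMD pairs. A standard result (implicit in Property 1 and made explicit in the literature of Oellermann--Peters-Fransen) is that $sdim(G)$ equals the size of a minimum vertex cover of $G_{SR}$, equivalently $2n$ minus the maximum independent set of $G_{SR}$. So after determining all MMD pairs, I would carry out the following steps in order: first, classify the MMD pairs by computing distances $d(u_i,u_j)$, $d(v_i,v_j)$, $d(u_i,v_j)$ in closed form as functions of the index differences modulo $4k$ (these are piecewise-linear with the two $v$-subcycles contributing the combinatorially delicate part); second, verify the two defining inequalities of Definition 1 to pin down precisely the edge set of $G_{SR}$; third, prove that every vertex cover of this $G_{SR}$ has at least $5k$ vertices, i.e.\ that the maximum independent set has at most $4k-0 = 3k$ vertices so that $2n - 3k = 8k-3k = 5k$.

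The main obstacle will be the third step: bounding the maximum independent set (equivalently the minimum vertex cover) of the strong resolving graph from below by $5k$. Once the MMD pairs are known, $G_{SR}$ will decompose into a highly structured union of cycles and matchings reflecting the two $v$-subcycles and the $u$-cycle, and I would argue that each structural block forces a certain number of vertices into any cover --- for instance, if some block is a cycle $C_{2k}$ on the even $v$-vertices, it alone contributes a cover of size $k$, and similarly for the odd $v$-vertices and the $u$-cycle --- and then add these contributions, taking care that the blocks are vertex-disjoint so the lower bounds sum cleanly to $5k$. The hypothesis $k \geq 10$ strongly suggests that the clean asymptotic description of the MMD pairs (and hence of $G_{SR}$'s block structure) holds only once $k$ is large enough that the distance formulas no longer suffer from small-index wraparound degeneracies; so a subtle part of the argument will be confirming that for $k \geq 10$ the diameter and the MMD classification take their generic form, isolating the small-$k$ exceptions where Property 2 alone would be too weak to reach $5k$.
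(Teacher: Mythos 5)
Your overall framework --- forcing vertices into $S$ via Property 1 applied to mutually maximally distant pairs, equivalently bounding from below the minimum vertex cover of the strong resolving graph --- is sound, and it is in fact the same mechanism the paper uses, merely rephrased in vertex-cover language. The problem is that the proposal stops at the level of a plan: none of the three steps you list is actually carried out. You compute no distances, exhibit no MMD pairs, and prove no bound on the cover; every substantive claim is prefixed by ``I would'' or ``I expect''. The entire content of the lemma lies precisely in the steps you defer. For comparison, the paper's proof does the following concrete work: (i) it observes $d(v_i,v_{i+2k-1})=k+2=Diam(GP(4k,2))$, so the $4k$ diametral pairs $(v_i,v_{i+2k-1})$, in which each $v$-vertex occurs exactly twice, force at least $2k$ $v$-vertices into $S$ by Property 2; (ii) for the $u$-vertices it splits into two cases: if the indices of the $u$-vertices missing from $S$ all have the same parity, the $k$ diametral pairs $(u_{2i},u_{2i+2k})$ show at most $k$ are missing, while if both an even- and an odd-indexed $u$-vertex are missing, the MMD pairs $\{u_{2i},u_{2i+2l-1}\}$ and $\{u_{2j+1},u_{2j+2l}\}$, $l=3,\dots,2k-2$, pin the set of missing $u$-vertices down to at most $10\le k$ vertices. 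Together this gives $2k+3k=5k$.

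Two of your structural guesses are also off. The diametral $v$-pairs $(v_i,v_{i+2k-1})$ join vertices of \emph{opposite} parity (as $2k-1$ is odd), so the relevant subgraph of $G_{SR}$ on the $v$-vertices is not a pair of parity cycles $C_{2k}$ but a single $4k$-cycle (since $\gcd(2k-1,4k)=1$); the resulting cover count $2k$ happens to agree with your guess, but the block decomposition you predict is not the one that occurs, and an argument built on it would not survive the verification step. Likewise, the hypothesis $k\ge 10$ is not about small-index wraparound degeneracies in the distance formulas: it enters only in the mixed-parity case for the $u$-vertices, where the MMD analysis bounds the number of missing $u$-vertices by the absolute constant $10$, and one needs $10\le k$ to conclude that at most $k$ are missing. (There is also a slip in your arithmetic, ``$4k-0=3k$''.) Until the distance computations, the MMD classification, and the cover bound are actually executed, the proposal is a strategy outline rather than a proof.
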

\begin{proof} Let us note that $d(v_i,v_{i+2k-1})=k+2 =
Diam(GP(4k,2)), i=0,1,...,4k-1$. If we suppose that $S$ contains
less than $2k$ $v$-vertices, since we have $4k$ pairs
$(v_i,v_{i+2k-1}), i=0,...,4k-1$, and each vertex appears exactly
twice, there exists some pair $(v_i,v_{i+2k-1}), v_i \notin
S,v_{i+2k-1} \notin S$. This is in contradiction with Property 2.
Therefore, $S$ contains at least $2k$ $v$-vertices.

Considering $u$-vertices, we have two cases:
\begin{itemize}
\item[Case 1.] If there exist $u_{2i} \notin S$ and $u_{2j+1} \notin S$,
then, as pairs $\{u_{2i},u_{2i+2l-1}\}, l=3,4,...,2k-2$ and $\{u_{2j+1},u_{2j+2l}\}, l=3,4,...,2k-2$
are mutually maximally distant, at most 8 additional vertices are not in $S$:
$u_{2i-3}, u_{2i-1}$, $u_{2i+1},u_{2i+3},u_{2j-2},u_{2j},u_{2j+2},u_{2j+4}$.
Consequently, at most 10 vertices, where $10 \leq k$, are not in $S$;

\item[Case 2.] Indices of $u$-vertices which are not in $S$ are all either
even or odd. Without loss of generality we may assume that all these indices
are even. Since $d(u_{2i},u_{2i+2k})=k+2 =
Diam(GP(4k,2)), i=0,1,...,k-1$, according to Property 2, we have $k$ pairs $(u_{2i},u_{2i+2k}), i=0,...,k-1$,
with at most one vertex not in $S$. Therefore, at most $k$ $u$-vertices are not in $S$.
\end{itemize}

In both cases we have proved that at most $k$ $u$-vertices are not in $S$, so
at least $3k$ $u$-vertices are in $S$. Since we have already proved that
at least $2k$ $v$-vertices should be in $S$, it follows that $|S| \geq 5k$.
\end{proof}

The strong metric dimension of $GP(4k,2)$ is given in Theorem \ref{4k}.

\begin{thm} \label{4k} For all $k \geq 5$ it holds $sdim(GP(4k,2))=5k$.\end{thm}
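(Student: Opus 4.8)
The plan is to bound $sdim(GP(4k,2))$ from both sides and squeeze out the exact value. For the upper bound I would simply invoke Lemma \ref{4ka}, which already exhibits an explicit strong resolving set $S$ of $GP(4k,2)$; counting its three constituent parts gives $|S| = 2k + k + 2k = 5k$, so $sdim(GP(4k,2)) \le 5k$ for every $k \ge 3$. For the lower bound I would use Lemma \ref{4kb}, which asserts $|S| \ge 5k$ for any strong resolving set whenever $k \ge 10$. Combining the two inequalities immediately yields $sdim(GP(4k,2)) = 5k$ for all $k \ge 10$.

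The subtlety is that the two lemmas do not share the same range of validity: the upper bound holds from $k \ge 3$, but the lower bound of Lemma \ref{4kb} is established only for $k \ge 10$, whereas the theorem claims the value for all $k \ge 5$. Thus the direct sandwich argument leaves the five cases $k \in \{5,6,7,8,9\}$ unresolved, and these must be handled separately.

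For the remaining small cases I would mirror the strategy used in the proof of Theorem \ref{4k2} for $GP(6,2)$: feed each concrete instance $GP(20,2), GP(24,2), \dots, GP(36,2)$ into the integer linear programming formulation (1)--(3) and solve it exactly with a solver such as CPLEX. Since Lemma \ref{4ka} already guarantees a feasible solution of value $5k$, it suffices to confirm computationally that no strong resolving set of smaller cardinality exists, which establishes $sdim(GP(4k,2)) = 5k$ for $k = 5,6,7,8,9$ as well.

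I expect the only genuine obstacle to be this range gap rather than any deep difficulty in the argument. The gap is not an artifact of loose bookkeeping: the counting in Case 1 of Lemma \ref{4kb} relies on the estimate that at most $10$ $u$-vertices can be excluded, which is exactly what forces the hypothesis $10 \le k$ for the conclusion ``at most $k$ $u$-vertices are not in $S$'' to follow. For smaller $k$ this combinatorial slack vanishes, and a purely structural lower-bound proof would require a finer case analysis; consequently the exact computational verification is the cleanest way to close the five outstanding cases.
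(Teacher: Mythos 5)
Your proposal is correct and follows essentially the same route as the paper: the exact value for $k \geq 10$ comes from combining Lemma \ref{4ka} (whose set has cardinality $2k+k+2k=5k$) with Lemma \ref{4kb}, and the remaining cases $k \in \{5,6,7,8,9\}$ are closed by solving the ILP formulation (1)--(3) with CPLEX. Your additional remark correctly pinpoints why the lower-bound lemma needs $k \geq 10$, namely the ``at most 10 excluded $u$-vertices'' count in Case 1 of its proof.
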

\begin{proof}
Lemma \ref{4ka} and Lemma \ref{4kb} imply that $S= \{u_i|i=0,1,...,2k-1\} \cup
\{u_{2k+2i+1}| i=0,1,...k-1\} \cup \{v_{2i+1}| i=0,1,...2k-1\}$ is a
strong metric basis of $GP(4k,2)$ for $k \geq 10$. Using CPLEX
solver on ILP formulation (1)-(3), we have proved that set $S$ from Lemma \ref{4ka} is a
strong metric basis of $GP(4k,2)$ for $k \in \{5,6,7,8,9\}$.
\end{proof}

\begin{sidewaystable}
\small \caption{Shortest paths in Lemma \ref{4k1a}}
\label{l4k1a}
\begin{center}
\begin{tabular}{|l|l|c|c|c|}
\hline
  {\em Case } & {\em vertices} & {\em res. by} & {\em condition} & {\em shortest path} \\
 \hline
 1 & ($u_{2i},u_{2j}$), & $u_{2j+1}$ & $j-i \geq 2$ & $ u_{2i}, v_{2i}, v_{2i+2}, ..., v_{2j}, u_{2j}, u_{2j+1}$ \\
  & $0 \leq i,j \leq k-1$ &$u_{2j+1}$ & $j-i = 1$ & $u_{2i}, u_{2i+1}, u_{2i+2}=u_{2j}, u_{2i+3}=u_{2j+1}$\\
\hline
  2 & ($v_i,v_j$), & $v_1$ & $i,j$ even & $ v_i, v_{i+2}, ..., v_j, v_{j+2}, ..., v_1$\\
   & $2k+4 \leq i,j \leq 4k$ & $v_0$ & $i,j$ odd & $ v_i, v_{i+2}, ..., v_j, v_{j+2}, ..., v_0$\\
   & & $v_0$ & $i$ even, $j$ odd & $ v_i, u_i, u_{i+1}, v_{i+1}, v_{i+3}, ..., v_j, v_{j+2}, ..., v_0$\\
  & & $v_1$ & $i$ odd, $j$ even & $ v_i, u_i, u_{i+1}, v_{i+1}, v_{i+3}, ..., v_j, v_{j+2}, ..., v_1$\\
\hline
 3 & ($u_{2i},v_j$), & $u_j$& $j$ even, $j-2i \leq 2k$ & $ u_{2i}, v_{2i}, v_{2i+2}, ..., v_j, u_j$ \\
  & $0 \leq i \leq k-1$, & $u_j$& $j$ even, $2k+2 \leq j-2i < 4k-2$ & $ u_j, v_j, v_{j+2}, ..., v_{2i-1}, u_{2i-1}, u_{2i}$ \\
  & $2k+4 \leq j \leq 4k$& $u_j$& $j$ odd, $j-2i \leq 2k-1$ & $ u_{2i}, u_{2i+1}, v_{2i+1}, v_{2i+3}, ..., v_j, u_j$ \\
  & & $u_j$& $j$ odd, $2k+1 \leq j-2i \leq 4k-3$ & $ u_j, v_j, v_{j+2}, ..., v_{2i}, u_{2i}$ \\
  & & $v_{2i}$& $j-2i = 4k-2$ & $ v_j, v_{j+2}, u_{j+2}, u_{j+3}, v_{j+3}=v_{2i}$ \\
  & & $u_1$& $j-2i = 4k-1$ & $ v_j=v_{4k-1}, v_{j+2}=v_0=v_{2i}, u_0=u_{2i}, u_1$ \\
  & & $v_0$& $j-2i = 4k$ & $ v_j=v_{4k}, u_j=u_{4k}, u_0=u_{2i}, v_0$ \\
\hline
 \end{tabular}
\end{center}
\end{sidewaystable}

In the case when $n=4k+1$, an upper bound of the strong metric
dimension of $GP(4k+1,2)$ will be determined as a corollary of the
following lemma.

\begin{lem}\label{4k1a} Set $S= \{u_{2i+1}|i=0,1,...,k-1\} \cup \{u_{2k+i}| i=0,1,...,2k\} \cup \{v_i| i=0,1,...2k+3\}$
is a strong resolving set of $GP(4k+1,2)$ for $k \geq 3$.\end{lem}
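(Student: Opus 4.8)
The plan is to verify that the proposed set $S$ strongly resolves every pair of vertices in $GP(4k+1,2)$. Following the pattern established in the proofs of Lemma \ref{4k2a} and Lemma \ref{4ka}, the first step is to reduce the problem: since $S$ already contains all $u_i$ with odd $i$ in the range $1,\dots,2k-1$, all $u_i$ with $i = 2k,\dots,4k$, and all $v_i$ with $i=0,\dots,2k+3$, any pair having at least one vertex in $S$ is automatically strongly resolved by that vertex. Hence I would catalogue precisely which vertices lie \emph{outside} $S$: the $u$-vertices with even index in $\{0,2,\dots,2k-2\}$ (equivalently $u_{2i}$ for $0 \le i \le k-1$), and the $v$-vertices with index in $\{2k+4,\dots,4k\}$. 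This leaves exactly three families of pairs to examine, matching the three cases of Table \ref{l4k1a}: $(u_{2i},u_{2j})$, $(v_i,v_j)$ with both indices large, and the mixed pairs $(u_{2i},v_j)$.

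For each of the three cases I would invoke the corresponding rows of Table \ref{l4k1a}, whose shortest-path column supplies an explicit resolving vertex together with a witnessing geodesic. The second step is therefore to confirm, for each subcase, two things: that the listed walk is genuinely a shortest path (so that the resolving condition $d(u,w)=d(u,v)+d(v,w)$ or its symmetric counterpart holds), and that the resolving vertex indeed belongs to $S$. The membership checks are the routine part: for instance in Case 1 the resolver $u_{2j+1}$ has odd index, and since $0 \le j \le k-1$ we have $2j+1 \le 2k-1$, so $u_{2j+1}\in S$; in Case 2 the resolvers are $v_0$ and $v_1$, which lie in $S$ by definition; in Case 3 the resolver is typically $u_j$ with $2k+4 \le j \le 4k$, so $u_j \in S$ since $S$ contains all $u_i$ for $i \ge 2k$, with the boundary subcases $j-2i \in \{4k-2,4k-1,4k\}$ handled by the special resolvers $v_{2i}$, $u_1$, and $v_0$.

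The main obstacle, and where the real care is required, is the verification that the paths in Table \ref{l4k1a} are shortest. This hinges on the distance structure of $GP(4k+1,2)$: distances along the inner cycle use the length-$2$ chords $\{v_i,v_{i+2}\}$, so an inner segment from $v_a$ to $v_b$ costs roughly $|a-b|/2$ steps when $a\equiv b \pmod 2$, while parity changes force a detour through a spoke $\{u_i,v_i\}$ and a rim edge $\{u_i,u_{i+1}\}$. I would first establish the relevant distance formulas (or cite the diameter fact $Diam(GP(4k+1,2))$ implicitly used in the table), paying attention to the fact that $n=4k+1$ is odd, so the inner edges $\{v_i,v_{i+2}\}$ form a single Hamiltonian-type cycle on the $v$-vertices rather than splitting into two parity classes as happens when $n$ is even. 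The delicate subcases are precisely the ``wrap-around'' ones at the ends of the index ranges (the rows with conditions $j-2i = 4k-2,4k-1,4k$ in Case 3, and the parity-mismatched rows of Case 2), where the shortest route switches between going directly along the inner cycle and crossing to the rim; here I would check optimality by comparing the two candidate route lengths explicitly and confirming the tabulated path achieves the minimum. Once every row of Table \ref{l4k1a} is validated as a geodesic and every resolver confirmed to lie in $S$, the three cases together exhaust all pairs with neither vertex in $S$, and the conclusion that $S$ is a strong resolving set follows.
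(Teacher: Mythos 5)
Your proposal follows essentially the same route as the paper: it identifies the vertices outside $S$ (the even-indexed $u_{2i}$ with $0 \le i \le k-1$ and the $v_j$ with $2k+4 \le j \le 4k$), splits the remaining pairs into the same three cases, and resolves each via the rows of Table \ref{l4k1a}, checking that each resolver lies in $S$. This matches the paper's argument, which likewise defers the geodesic verification to the table.
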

\begin{proof}
As in Lemma \ref{4k2a}, there are three possible cases:
\begin{itemize}
\item[Case 1.] $(u_{2i},u_{2j})$, $0 \leq i,j \leq k-1$. Without loss of generality we
may assume that $i < j$. Vertices $u_{2i}$ and $u_{2j}$ are strongly
resolved by vertex $u_{2j+1} \in S$. The shortest paths
corresponding to subcases $j-i \geq 2$ and $j-i=1$ are given in
Table \ref{l4k1a};

\item[Case 2.] $(v_i,v_j)$, $2k+4 \leq i,j \leq 4k$ .
Without loss of generality we may assume that $i < j$. If $j$ is
even, vertices $v_i$ and $v_j$ are strongly resolved by $v_1 \in S$,
while if $j$ is odd, they are strongly resolved by $v_0 \in S$. The
details about shortest paths can be seen in Table \ref{l4k1a};

\item[Case 3.] $(u_{2i},v_j)$, $0 \leq i \leq k-1$, $2k+4 \leq j \leq
4k$. In four initial subcases, when $j-2i < 4k-2$, vertices $u_{2i}$
and $v_j$ are strongly resolved by $u_j$. Since set $S$ contains
u-vertices $u_{2k}, u_{2k+1}, ..., u_{4k}$ and $2k+4 \leq j \leq 4k$
it follows that $u_j \in S$. The remaining subcases correspond to
situation when $j-2i \geq 4k-2$. The pair $(u_{2i},v_j)$ is strongly
resolved by $v_{2i}$ for $j-2i=4k-2$, by $u_1$ for $j-2i=4k-1$ and
by $v_0$ for $j-2i=4k$. By definition of $S$ it is obvious that
$u_1,v_0 \in S$, while $v_{2i} \in S$ since $2i \leq 2k-2$ and all
$v$-vertices with indices less or equal to $2k+3$ are in $S$. The
shortest paths corresponding to these seven subcases can be seen in
Table \ref{l4k1a}.
\end{itemize}
\end{proof}

\begin{cor} \label{4k1} If $k \geq 3$ then $sdim(GP(4k+1,2)) \leq 5k+5$.\end{cor}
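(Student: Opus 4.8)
The plan is to derive the bound purely by counting, since Lemma \ref{4k1a} already supplies a concrete strong resolving set $S$ of $GP(4k+1,2)$ for $k \geq 3$. By definition the strong metric dimension $sdim(GP(4k+1,2))$ equals the minimum cardinality of a strong resolving set, so every particular strong resolving set yields an upper bound. Thus it suffices to compute $|S|$ and conclude $sdim(GP(4k+1,2)) \leq |S|$.

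First I would count the three blocks defining $S$ separately. The block $\{u_{2i+1} \mid i=0,\dots,k-1\}$ consists of the $k$ odd-indexed $u$-vertices $u_1,u_3,\dots,u_{2k-1}$. The block $\{u_{2k+i} \mid i=0,\dots,2k\}$ consists of the $2k+1$ consecutive $u$-vertices $u_{2k},u_{2k+1},\dots,u_{4k}$. The block $\{v_i \mid i=0,\dots,2k+3\}$ consists of the $2k+4$ $v$-vertices $v_0,\dots,v_{2k+3}$.

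The one point requiring care is disjointness, so that the three block sizes add without overcounting. The first two blocks both contain $u$-vertices, but the first uses indices at most $2k-1$ while the second uses indices at least $2k$, so they share no vertex; the third block contains only $v$-vertices and is therefore disjoint from the other two. Hence $|S| = k + (2k+1) + (2k+4) = 5k+5$, and combining this with Lemma \ref{4k1a} gives $sdim(GP(4k+1,2)) \leq 5k+5$.

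Since the argument is a direct cardinality computation with no structural subtlety beyond the disjointness check, I do not expect a genuine obstacle here; the substantive work has already been discharged in verifying that $S$ strongly resolves every pair of vertices, as carried out in Lemma \ref{4k1a} through the shortest paths tabulated in Table \ref{l4k1a}.
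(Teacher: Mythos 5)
Your proposal is correct and matches the paper's (implicit) argument exactly: the corollary follows from Lemma \ref{4k1a} by counting the three pairwise disjoint blocks of $S$, giving $|S| = k + (2k+1) + (2k+4) = 5k+5$. The disjointness check you include is the only point of care, and you handle it properly.
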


The strong metric bases given in Lemma \ref{4k2a} and Lemma \ref{4ka} and the strong resolving
set given in Lemma \ref{4k1a} hold for $n \geq 20$. The strong metric bases for $n \leq 19$
have been obtained by CPLEX solver on ILP formulation (1)-(3). Computational results  show that
for $n \in \{6,10,14,18\}$ strong resolving set $S$ from Lemma \ref{4k2a} is a strong metric basis.
The strong metric basis for the remaining cases for $n \leq 19$ are
given in Table \ref{pos}.\\

\begin{table}
\small
\caption{Other strong metric bases of $GP(n,2)$ for $n \leq 19$}
\label{pos}
\begin{center}
\begin{tabular}{|c|c|c|}
\hline
 $n$ & $sdim(GP(n,2))$ & $S$ \\
 \hline
 5 & 8 & $\{u_0, u_1,u_2,u_3, v_0, v_1, v_2, v_3\}$ \\   
 \hline
 7& 9 &$\{u_0, u_1,u_2,u_3, v_0, v_1,v_2,v_3,v_4,v_6\}$ \\   
 \hline
 8& 8 &$\{u_4,u_5,u_6,u_7,v_1,v_3,v_5,v_7\}$ \\   
 \hline
 9 & 13 &$\{u_2,u_4,u_5,u_6,u_7,u_8, v_0,v_2,v_3,v_4,v_5,v_6,v_7\}$ \\  
 \hline
 11 & 12 &$\{u_0, u_1,u_2,u_3,u_4,u_5,v_0,v_1,v_2,v_5,v_6,v_7\}$ \\ 
 \hline
 12 & 13 & $\{u_0,u_1,u_2,u_3,u_4,u_5,u_6,v_0,v_2,v_4,v_6,v_8,v_{10}\}$ \\ 
 \hline
 13 & 17 &$\{u_0,u_1,u_2,u_3,u_4,u_5,u_6,u_7\}$ \\ 
 & & $\cup \{v_1,v_3,v_5,v_6,v_7,v_8,v_9,v_{10},v_{12}\}$ \\
 \hline
 15 & 20 &$\{u_0,u_1,u_2,u_3,u_4,u_5,u_6,u_7,u_8,u_9\}$ \\ 
 & & $\cup \{v_0,v_1,v_2,v_3,v_4,v_5,v_6,v_7,v_8,v_9\}$ \\
 \hline
 16 & 19 &$\{u_0,u_1,u_2,u_3,u_4,u_5,u_6,u_7,u_8,u_9,u_{10}\}$ \\ 
 & & $\cup \{v_0,v_2,v_4,v_6,v_8,v_{10},v_{12},v_{14}\}$ \\
 \hline
 17 & 24 &$\{u_0,u_1,u_2,u_3,u_4,u_5,u_6,u_7,u_8,u_9,u_{10},u_{11}\}$ \\ 
 & & $\cup \{v_0,v_1,v_2,v_3,v_4,v_5,v_6,v_7,v_8,v_9,v_{10},v_{11}\}$ \\
 \hline
 19 & 24 & $\{u_0,u_4,u_6,u_7,u_8,u_9,u_{10},u_{11},u_{12},u_{13},u_{14},u_{15},u_{16},u_{17}\}$ \\
 & & $\cup \{v_0,v_1,v_5,v_6,v_9,v_{10},v_{11},v_{14},v_{15},v_{16}\}$ \\
 \hline
 \end{tabular}
\end{center}
\end{table}

{\bf Open problem 1}. We beleive that the strong metric dimension of
$GP(4k+1,2)=5k+5$, $k \geq 5$, but we were not able to give a
rigorous proof.

{\bf Open problem 2}. For the remaining case $n=4k+3, k \geq 5$,
experimental results indicate the following hypothesis:
$sdim(GP(4k+3,2))= 5k+6$ for $k=5l-2$ and $sdim(GP(4k+3,2))= 5k+4$
for $k \neq 5l-2$, where $l \in N $.

\section{Conclusions}

In this paper we have studied the strong metric dimension of
generalized Petersen graphs $GP(n,2)$. We have found closed formulas
of the strong metric dimensions in cases $n=4k$ and $n=4k+2$,
and a tight upper bound of the strong metric dimension for $n=4k+1$.

A future work could be directed towards obtaining the strong metric
dimension of some other challenging classes of graphs.

\nocite{*}
\bibliographystyle{abbrvnat}
\bibliography{paper}

\end{document}